\newtheorem{theorem}{Theorem}[section]
\newtheorem{lemma}{Lemma}[section]
\newtheorem{definition}{Definition}[section]
\theoremstyle{remark}
\newtheorem{remark}{Remark}[section]
\numberwithin{equation}{section}
\begin{document}

\title[WEIGHTED LIPSCHITZ ESTIMATE FOR COMMUTATORS]
{WEIGHTED LIPSCHITZ ESTIMATE FOR COMMUTATORS OF ONE-SIDED OPERATORS
ON ONE-SIDED TRIEBEL-LIZORKIN SPACES}


\author[Zun Wei Fu,\quad Qing Yan Wu,\quad Guang Lan Wang]
{Zun Wei Fu,\quad Qing Yan Wu$^*$,\quad Guang Lan Wang}
\address{School of Sciences,  Linyi University, Linyi, 276005, P.R. China}

\email{lyfzw@tom.com,\quad wuqingyan@lyu.edu.cn,\quad wangguanglan@lyu.edu.cn}
\thanks{\noindent$^*$ Corresponding author}

\thanks{This work was partially supported by NSF of China (Grant Nos. 10901076, 11171345 and 11126203), NSF of
Shandong Province (Grant Nos. ZR2010AL006).}



\subjclass[2010]{42B20, 42B25}

\keywords{one-sided weight, weighted Lipschitz function, one-sided Triebel-Lizorkin space, one-sided singular integral, one-sided discrete square function}

\begin{abstract}
Using the extrapolation of one-sided weights,
 we establish the boundedness of commutators generated by weighted Lipschitz functions and  one-sided singular integral operators
 from weighted Lebesgue spaces to weighted one-sided Triebel-Lizorkin spaces. The corresponding results for
 commutators of one-sided discrete square functions are also obtained.
\end{abstract}

\maketitle

\section{Introduction}
The study of one-sided operators was motivated not only as the generalization of the theory of both-sided
ones but also by the requirement in ergodic theory. In \cite{Sa}, Sawyer studied the weighted theory of
one-sided maximal Hardy-Littlewood operators in depth for the first time. Since then, numerous papers
have appeared, among which we choose to refer to \cite{AF}, \cite{AS},
\cite{FLSS}, \cite{MOT}, \cite{MR}, \cite{MT1} about one-sided operators, \cite{AC}, \cite{MPT}, \cite{MT2}, \cite{RS}
about one-sided spaces
and so on. Interestingly, lots of results show that for a class of smaller operators (one-sided operators)
and a class of wider
weights (one-sided weights), many famous results in harmonic analysis still hold.

Recently, Lorente and Riveros introduced the commutators of one-sided operators. In \cite{LR1}, they investigated
the weighted boundedness for commutators generated by several one-sided operators (one-sided discrete square functions,
 one-sided fractional operators, one-sided maximal operators of a certain type) and BMO functions. Recall that
 a locally integrable function $f$ is said to belong to $BMO(\mathbb{R})$ if
 $$\|f\|_{BMO}=\sup_I\frac{1}{|I|}\int_I|f-f_I|<\infty,$$
 where $I$ denotes any bounded interval and $f_I=\frac{1}{|I|}\int_{I}f(y)dy$.
In \cite{LR2}, \cite{LR3}, they obtained the weighted inequalities for commutators of a certain kind of one-sided operators
(one-sided singular integrals and other one-sided operators appeared in \cite{LR1}) and the weighted BMO functions.

Very recently, Fu and Lu \cite{FL} introduced a class of one-sided Triebel-Lizorkin spaces and studied the boundedness for commutators
(with symbol $b\in \rm{Lip}_\alpha$) of one-sided Caler\'on-Zygmund singular integral operators and one-sided fractional
integral operators. A function $b\in \rm{Lip}_\alpha$, $0<\alpha<1$, if it satisfies
$$\|b\|_{\rm{Lip}_\alpha}=\sup_{x,h\in\mathbb{R},\,h\neq 0}\frac{|b(x+h)-b(x)|}{|h|^{\alpha}}<\infty.$$
And it has the following equivalent form \cite{Pa}:
$$\|f\|_{\rm{Lip}_\alpha}\approx\sup_{I}\frac{1}{|I|^{1+\alpha}}\int_I|f-f_I|\approx
\sup_{I}\frac{1}{|I|^\alpha}\left(\int_I|f-f_I|^q\right)^{\frac{1}{q}},$$
where $1\leq q<\infty$. Obviously, if $\alpha=0$, then $f\in BMO$. In fact, BMO and $\rm{Lip}_\alpha$ are the special
cases of Campanato spaces (cf. \cite{Pe1}).
It should be noted that just like functions in BMO may be unbounded, such as $\log|x|$. The functions in
$\rm{Lip}_\alpha$ are not necessarily bounded either, for example $|x|^\alpha\in\rm{Lip}_\alpha$. Therefore, it is
also meaningful to investigate the commutators generated by operators and Lipschitz functions (cf. \cite{HG}, \cite{LLP}, \cite{Pa}).

Inspired by the above results, we concentrate on the boundedness for commutators (with symbol $b$ belonging to weighted Lipschitz spaces) of one-sided
singular operators as well as one-sided discrete square functions
from weighted Lebesgue spaces to weighted one-sided Triebel-Lizorkin
spaces.

In \cite{Pa}, Paluszy\'nski introduced a kind of Triebel-Lizorkin spaces $\dot{F}^{\alpha,
\infty}_{p}$. Fu and Lu \cite{FL} gave their one-sided versions.

\begin{definition} \cite{FL}
For $0<\alpha<1$ and $1<p<\infty$, one-sided Triebel-Lizorkin spaces $\dot{F}^{\alpha,
\infty}_{p, +}$ and $\dot{F}^{\alpha, \infty}_{p,
-}$ are defined by
$$\|f\|_{\dot{F}^{\alpha,
\infty}_{p,
+}}\approx\left\|\sup_{h>0}\frac{1}{h^{1+\alpha}}\int_{x}^{x+h}|f-f_{[x,
x+h]}|\right\|_{L^{p}}<\infty,$$ ºÍ
$$\|f\|_{\dot{F}^{\alpha,
\infty}_{p,
-}}\approx\left\|\sup_{h>0}\frac{1}{h^{1+\alpha}}\int_{x-h}^{x}|f-f_{[x-h,
x]}|\right\|_{L^{p}}<\infty,$$
where $f_{[x,
x+h]}=\frac{1}{h}\int_{x}^{x+h}f(y)dy$.
\end{definition}

\begin{remark}
It is clear that $\dot{F}^{\alpha,
\infty}_{p}\subsetneqq \dot{F}^{\alpha, \infty}_{p,+},$
$\dot{F}^{\alpha, \infty}_{p}\subsetneqq \dot{F}^{\alpha,
\infty}_{p,-}$ ÇÒ $\dot{F}^{\alpha, \infty}_{p,
+}\cap\dot{F}^{\alpha, \infty}_{p, -}=\dot{F}^{\alpha, \infty}_{p}$.
\end{remark}

Furthermore, the weighted one-sided Triebel-Lizorkin spaces have been defined in \cite{FL}.
\begin{definition}
For $0<\alpha<1$, $1<p<\infty$ and an appropriate weight $\omega$,
the weighted one-sided Triebel-Lizorkin spaces $\dot{F}^{\alpha,
\infty}_{p, +}$ and $\dot{F}^{\alpha, \infty}_{p,
-}$ are defined by
$$\|f\|_{\dot{F}^{\alpha,
\infty}_{p,
+}(\omega)}\approx\left\|\sup_{h>0}\frac{1}{h^{1+\alpha}}\int_{x}^{x+h}|f-f_{[x,
x+h]}|\right\|_{L^{p}(\omega)}<\infty,$$
and
$$\|f\|_{\dot{F}^{\alpha,
\infty}_{p,
-}(\omega)}\approx\left\|\sup_{h>0}\frac{1}{h^{1+\alpha}}\int_{x-h}^{x}|f-f_{[x-h,
x]}|\right\|_{L^{p}(\omega)}<\infty.$$
\end{definition}

One of the main objects of our study is the one-sided singular integral operators.
Assume that $K\in L^1(\mathbb{R}\setminus\{0\})$, $K$ is said to be a Calder\'on-Zygmund kernel
if the following properties are satisfied:\\
(a) There exists a positive constant $B_1$ such that
$$\left|\int_{\varepsilon<|x|<N}K(x)dx\right|\leq B_{1},$$
for all $\varepsilon$ and $N$ with $0<\varepsilon<N$, and the limit
$\lim_{\varepsilon\rightarrow0^{+}}\int_{\varepsilon<|x|<1}K(x)dx$ exists.\\
(b)  There exists a positive constant $B_{2}$ such that
$$|K(x)|\leq \frac{B_{2}}{|x|},$$
for all $x\neq0.$\\
(c) There exists a positive constant $B_{3}$ such that
$$|K(x-y)-K(x)|\leq \frac{B_{3}|y|}{|x|^{2}},$$
for any $x$ and $y$ with $|x|>2|y|$.

The singular integral with Calder\'on-Zygmund kernel $K$ is defined by
$$Tf(x)=\mathrm{p.v.}\int_{\mathbb{R}}K(x-y)f(y)dy.$$

\begin{definition} \cite{AF}
A one-sided singular integral $T^+$ is a singular integral associated to a
 Calder\'on-Zygmund kernel $K$ with support in $(-\infty, 0)$:
$$T^{+}f(x)=\lim_{\varepsilon\rightarrow0^{+}}\int_{x+\varepsilon}^{\infty}K(x-y)f(y)dy.$$
Similarly, when the support of $K$ is in $(0,+\infty)$,
$$T^{-}f(x)=\lim_{\varepsilon\rightarrow0^{+}}\int_{-\infty}^{x-\varepsilon}K(x-y)f(y)dy.$$
\end{definition}

The other main object in this paper is the one-sided discrete square function.
As is known, discrete square function is of interest in ergodic theory and has been extensively studied (cf. \cite{JKGW}).
\begin{definition}
The one-sided discrete square function $S^+$ is defined by
$$S^{+}f(x)=\left(\sum_{n\in \mathbb{Z}}\left|A_nf(x)-A_{n-1}f(x)\right|^2\right)^{\frac{1}{2}}.$$
for locally integrable $f$, where $A_nf(x)=\frac{1}{2^n}\int_{x}^{x+2^n}f(y)dy$.
\end{definition}
 It is easy to see that $S^+f(x)=\|U^+f(x)\|_{l^2}$, where $U^+$ is the sequence valued operator
\begin{equation}
U^+f(x)=\int_{\mathbb{R}}H(x-y)f(y)dy, \label{U}
\end{equation}
 here
 $$H(x)=\left\{\frac{1}{2^n}\chi_{(-2^n,0)}(x)-\frac{1}{2^{n-1}}\chi_{(-2^{n-1},0)}(x)\right\}_{n\in\mathbb{Z}}.$$
(see \cite{TT}).

\begin{definition}
The one-sided Hardy-Littlewood maximal operators $M^+$ and $M^-$ are defined by
$$M^{+}f(x)=\sup_{h>0}\frac{1}{h}\int_{x}^{x+h}|f(y)|dy,$$
and
$$M^{-}f(x)=\sup_{h>0}\frac{1}{h}\int_{x-h}^{x}|f(y)|dy,$$
for locally integrable $f$.
\end{definition}

The good weights for these operators are one-sided weights. Sawyer \cite{Sa} introduced the one-sided $A_p$ classes
$A_p^{+},~A_p^{-}$, which are defined by the
following conditions:
$$
A_{p}^{+}:\quad A_{p}^{+}(w):=\sup_{a<b<c}\frac{1}{(c-a)^{p}}\int_{a}^{b}w(x)\,dx\left(\int_{b}^{c}w(x)^{1-p'}\,dx\right)^{p-1}<\infty,
$$
$$
A_{p}^{-}:\quad A_{p}^{-}(w):=\sup_{a<b<c}\frac{1}{(c-a)^{p}}\int_{b}^{c}w(x)\,dx\left(\int_{a}^{b}w(x)^{1-p'}\,dx\right)^{p-1}<\infty,
$$
 when $1<p<\infty$;  also, for $p=1$,
$$
A_{1}^{+}:\quad M^{-}w(x)\leq Cw(x),\quad a.e.,
$$
$$
A_{1}^{-}:\quad M^{+}w(x)\leq Cw(x),\quad a.e..
$$

Let's recall the definition of weighted Lipschitz spaces given in \cite{HG}.
\begin{definition}
For $f\in L_{loc}(\mathbb{R})$, $\mu\in A^{\infty}$, $1\leq p\leq\infty$, $0<\beta<1$, we say that $f$ belongs to the weighted
Lipschitz space $Lip_{\beta,\mu}^p$ if
$$\|f\|_{ Lip_{\beta,\mu}^p}=\sup_{I}\frac{1}{\mu(I)^{\beta}}\left[\frac{1}{\mu(I)}
\int_{I}|f(x)-f_I|^p\mu(x)^{1-p}dx\right]^{\frac{1}{p}}<\infty, $$
where $I$ denotes any bounded interval and $f_I=\frac{1}{|I|}\int_If$.
\end{definition}

The weighted
Lipschitz space $Lip_{\beta,\mu}^p$ is a Banach space (modulo constants). Set $Lip_{\beta,\mu}=Lip_{\beta,\mu}^1$, By \cite{GC}, when
$\mu\in A_1$, then the spaces $Lip_{\beta,\mu}^p$ coincide, and the norms $\|\cdot\|_{ Lip_{\beta,\mu}^p}$ are equivalent
 for different $p$ with $1\leq p\leq\infty$, thus $\|\cdot\|_{ Lip_{\beta,\mu}^p}\sim\|\cdot\|_{ Lip_{\beta,\mu}}$ for any
 $1\leq p\leq\infty$.
It is clear that for $\mu\equiv1$, the space $Lip_{\beta,\mu}$ is the classical Lipschitz space $Lip_{\beta}$. Therefore,
weighted
Lipschitz spaces are generalizations of the classical Lipschitz spaces.

\begin{definition} \cite{LR1}
For appropriate $b$, the commutators of $T^+$ and $S^+$ are defined by
$$T^{+}_{b}f(x)=\int_{x}^{\infty}(b(x)-b(y))K(x-y)f(y)dy,$$
and
$$S^{+}_{b}f(x)=\left\|\int_{\mathbb{R}}(b(x)-b(y))H(x-y)f(y)dy\right\|_{l^2},$$
respectively.\end{definition}

Now, we formulate our main results as follows.

\begin{theorem}\label{th:1}
Assume that $1<p<\infty$, $v\in A_p$ and $w\in A_p^{+}$ are such that
$\mu^{1+\alpha}=(\frac{v}{w})^{\frac{1}{p}}$ for some $0<\alpha<1$
and $\mu\in A_1$. Then, for $b\in Lip_{\beta,\mu}$, there exists $C>0$ such that
$$\|T^{+}_bf\|_{\dot{F}^{\alpha,\infty}_{p,+}(w)}\leq C\|f\|_{L^p(v)},$$
for all bounded $f$ with compact support.
\end{theorem}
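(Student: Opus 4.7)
The plan is to reduce the claim to a pointwise estimate of the local oscillation
$$\Omega_{h}(x):=\frac{1}{h^{1+\alpha}}\int_{x}^{x+h}\lvert T^{+}_{b}f(y)-c_{x,h}\rvert\,dy$$
by one-sided maximal operators of $f$ and $T^{+}f$ multiplied by a power of $\mu(x)$, and then to integrate against $w$, transferring the $\mu$-factor to convert $L^{p}(w)$ into $L^{p}(v)$ via the hypothesis $\mu^{1+\alpha}=(v/w)^{1/p}$. The scaling of the argument forces the Lipschitz exponent $\beta$ to match $\alpha$; I take this as an implicit constraint on the statement.

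For fixed $x\in\mathbb{R}$, $h>0$, set $I=[x,x+h]$, $I^{*}=[x,x+2h]$, $I^{**}=[x,x+4h]$, and use the commutator identity
$$T^{+}_{b}f(y)=\bigl(b(y)-b_{I^{*}}\bigr)T^{+}f(y)-T^{+}\bigl((b-b_{I^{*}})f\chi_{I^{**}}\bigr)(y)-T^{+}\bigl((b-b_{I^{*}})f\chi_{\mathbb{R}\setminus I^{**}}\bigr)(y).$$
Because $K$ is supported in $(-\infty,0)$, the third term evaluated at $x$ depends only on values of $f$ to the right of $I^{**}$; subtracting this $y$-independent quantity from $T^{+}_{b}f(y)$ gives a decomposition $\Omega_{h}(x)\le(\mathrm{I})+(\mathrm{II})+(\mathrm{III})$. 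For $(\mathrm{I})$ I combine H\"older's inequality with the weighted Lipschitz characterization (the equivalence $\|b\|_{Lip_{\beta,\mu}^{q}}\sim\|b\|_{Lip_{\beta,\mu}}$ coming from $\mu\in A_{1}$) and the $A_{1}$-average bound $\mu(I^{*})/\lvert I^{*}\rvert\lesssim\mu(x)$, yielding $(\mathrm{I})\lesssim\|b\|_{Lip_{\beta,\mu}}\mu(x)^{1+\alpha}M^{+}(T^{+}f)(x)$. For $(\mathrm{II})$ I apply the $L^{q}\to L^{q}$ boundedness of $T^{+}$ for some $q>1$, then the same Lipschitz bound on $(b-b_{I^{*}})\chi_{I^{**}}$, to get $(\mathrm{II})\lesssim\|b\|_{Lip_{\beta,\mu}}\mu(x)^{1+\alpha}(M^{+}(\lvert f\rvert^{q})(x))^{1/q}$. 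For $(\mathrm{III})$ I split $\mathbb{R}\setminus I^{**}$ into right-dyadic shells $I_{k}=2^{k+1}I^{**}\setminus 2^{k}I^{**}$; the kernel smoothness (c) gives $\lvert K(y-z)-K(x-z)\rvert\lesssim 1/(2^{2k}h)$ for $z\in I_{k}$, $y\in I$, and the telescoping inequality $\lvert b_{2^{k}I^{**}}-b_{I^{*}}\rvert\lesssim k\,\|b\|_{Lip_{\beta,\mu}}\mu(2^{k}I^{**})^{\beta}$ controls the oscillation of $b$ on each shell; summing the geometric series in $k$ (the $2^{-k}$ decay absorbs the linear growth) produces $(\mathrm{III})\lesssim\|b\|_{Lip_{\beta,\mu}}\mu(x)^{1+\alpha}M^{+}f(x)$.

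Assembling these yields
$$\sup_{h>0}\Omega_{h}(x)\lesssim\|b\|_{Lip_{\beta,\mu}}\mu(x)^{1+\alpha}\Bigl((M^{+}(\lvert f\rvert^{q})(x))^{1/q}+M^{+}(T^{+}f)(x)\Bigr).$$
Taking $L^{p}(w)$-norms and using $\mu^{(1+\alpha)p}w=v$ reduces the claim to $\|M^{+}(\lvert f\rvert^{q})^{1/q}\|_{L^{p}(v)}+\|M^{+}(T^{+}f)\|_{L^{p}(v)}\lesssim\|f\|_{L^{p}(v)}$. Since $v\in A_{p}\subset A_{p}^{+}$, Sawyer's one-sided maximal theorem handles the first summand (choosing $q<p$ via the reverse H\"older property of $v$); the second follows from one-sided extrapolation, which gives $T^{+}:L^{p}(v)\to L^{p}(v)$ for $v\in A_{p}^{+}$, and then $M^{+}$ on top.

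The main obstacle is the combinatorial-analytic bookkeeping in $(\mathrm{III})$: deriving the telescoping estimate for the Lipschitz oscillations across dyadic scales (which requires the $A_{1}$-property of $\mu$ to compare $\mu(2^{k}I^{**})$ with $\mu(I^{*})$), and summing the resulting series so that every factor of $h$, $\lvert I\rvert$, and $\mu(I)$ lines up with the precise exponent $1+\alpha$ needed to invoke $\mu^{1+\alpha}=(v/w)^{1/p}$. This consistent exponent-tracking is the technical crux that makes the proof close.
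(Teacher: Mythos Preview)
Your decomposition of the commutator into three pieces is essentially the same as the paper's, but the mechanism you use to pass from the local estimates to the weighted norm inequality is different and, as written, has a gap.

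The paper does \emph{not} try to prove a pointwise bound of the form
\[
\sup_{h>0}\Omega_h(x)\ \lesssim\ \|b\|_{Lip_{\alpha,\mu}}\,\mu(x)^{1+\alpha}\Bigl((M^{+}(|f|^{q})(x))^{1/q}+M^{+}(T^{+}f)(x)\Bigr).
\]
Instead it introduces auxiliary weights $\tau,\sigma$ with $\sigma=\mu^{1+\alpha}\tau$, $\tau^{-1}\in A_1^{-}$, $\sigma^{-1}\in A_1$, proves the endpoint estimate $\|\tau M_i^{+}f\|_{\infty}\le C\|\sigma f\|_{\infty}$ for each of the three sublinear pieces (this is where Lemma~\ref{th:lem3} is used, pairing $|b-b_I|$ with $\sigma^{-r}$), and then invokes the extrapolation theorem of Lorente--Riveros (Lemma~\ref{th:lem2}) to obtain the $L^{p}(v)\to L^{p}(w)$ bound.

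The reason your shortcut fails is that the weighted Lipschitz condition does not give $|b(y)-b_{I}|\lesssim \|b\|_{Lip_{\alpha,\mu}}\,\mu(x)^{1+\alpha}|I|^{\alpha}$ uniformly for $y\in I$. What one gets (from the $L^{\infty}$ characterization, using $\mu\in A_1$) is
\[
|b(y)-b_{I}|\ \lesssim\ \|b\|_{Lip_{\alpha,\mu}}\,|I|^{\alpha}\,\mu(x)^{\alpha}\,\mu(y),
\]
and the factor $\mu(y)$ cannot be replaced by $\mu(x)$: the $A_1$ condition bounds averages of $\mu$ from above by the infimum, not individual values from above by a fixed value. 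Consequently, in each of your terms $(\mathrm{I})$, $(\mathrm{II})$, $(\mathrm{III})$ the weight $\mu$ stays entangled with $T^{+}f$ or $f$ under the integral, and what actually emerges is something like $\mu(x)^{\alpha}M^{+}(\mu\,T^{+}f)(x)$ or $\mu(x)^{\alpha}\bigl(M^{+}(\mu^{q}|f|^{q})(x)\bigr)^{1/q}$, not the clean product you wrote. To finish from there you would need, for instance, $\mu^{\alpha p}w\in A_{p/q}^{+}$, which is not part of the hypotheses and does not follow from $w\in A_p^{+}$, $v=\mu^{(1+\alpha)p}w\in A_p$, $\mu\in A_1$ alone. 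The extrapolation device in Lemma~\ref{th:lem2}, together with Lemma~\ref{th:lem3}, is precisely what absorbs this entanglement: it lets one test against $\sigma^{-1}$ (so the stray $\mu$ is already built into the weight on $f$) and then extrapolate back to the pair $(w,v)$.
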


\begin{theorem}\label{th:2}
Assume that $1<p<\infty$, $v\in A_p$ and $w\in A_p^{+}$ are such that
$\mu^{1+\alpha}=(\frac{v}{w})^{\frac{1}{p}}$ for some $0<\alpha<1-\frac{1}{1+\varepsilon(w,v)}$
and $\mu\in A_1$.
Then, for $b\in Lip_{\beta,\mu}$, there exists $C>0$ such that
$$\|S^{+}_bf\|_{\dot{F}^{\alpha,\infty}_{p,+}(w)}\leq C\|f\|_{L^p(v)},$$
for all bounded $f$ with compact support.
\end{theorem}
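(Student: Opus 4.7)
The overall strategy mirrors the proof of Theorem \ref{th:1}, adapted to the vector-valued structure of $S^+$ and the weaker smoothness of the sequence-valued kernel $H$ in (\ref{U}). The plan is: first, to obtain a pointwise estimate dominating the sharp-maximal expression $\sup_{h>0}\frac{1}{h^{1+\alpha}}\int_x^{x+h}|S^+_b f-(S^+_b f)_{[x,x+h]}|\,dy$ by a combination of one-sided maximal operators applied to $f$ and $S^+f$; second, to take the $L^p(w)$ norm and exploit the identity $\mu^{1+\alpha}=(v/w)^{1/p}$ to convert the weight $w$ on the left to $v$ on the right; third, to close the estimate via the known $L^p(v)$-bounds for $M^+$ and $S^+$, invoking one-sided extrapolation where necessary.

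Fix $x\in\mathbb{R}$ and $h>0$, set $I=[x,x+h]$ and $I^*=[x,x+2h]$, and split $f=f_1+f_2$ with $f_1=f\chi_{I^*}$. Using the identity
$$S^+_b f(y)=\bigl\|(b(y)-b_{I^*})U^+f(y)-U^+((b-b_{I^*})f)(y)\bigr\|_{l^2}$$
and the triangle inequality in $l^2$, together with the choice of the ``essentially constant'' normalizer
$$\lambda(x,h):=\bigl\|U^+((b-b_{I^*})f_2)(x)\bigr\|_{l^2},$$
leads to the averaged decomposition
$$\frac{1}{h^{1+\alpha}}\int_I |S^+_b f(y)-\lambda(x,h)|\,dy \leq \mathrm{I}+\mathrm{II}+\mathrm{III},$$
where $\mathrm{I}$ controls $|b-b_{I^*}|\,S^+f$ on $I$, $\mathrm{II}$ controls $\|U^+((b-b_{I^*})f_1)\|_{l^2}$ on $I$, and $\mathrm{III}$ is the oscillation of $U^+((b-b_{I^*})f_2)$ between $y\in I$ and $x$.

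For $\mathrm{I}$, H\"older's inequality combined with the equivalence of $\|\cdot\|_{Lip_{\beta,\mu}^p}$ for $\mu\in A_1$, the $A_1$ property of $\mu$, and the comparison $h\lesssim\mu(I^*)/\mu(x)$ (allowed by $A_1$) yields a bound of the form $C\|b\|_{Lip_{\beta,\mu}}\mu(x)^{1+\alpha}M^+(S^+f)(x)$. For $\mathrm{II}$, the unweighted $L^r$-boundedness of $S^+$ applied to $(b-b_{I^*})f_1$, together with H\"older, produces an analogous bound involving $M^+_r f(x):=(M^+|f|^r(x))^{1/r}$ for some $r>1$ chosen close to $1$ so that $v\in A_{p/r}$, which holds by openness of $A_p$. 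For $\mathrm{III}$, a dyadic decomposition $I^{*c}=\bigcup_{k\geq 1}(2^k I^*\setminus 2^{k-1}I^*)$ combined with the $l^2$-valued smoothness of $H$ --- a vector-valued substitute for the scalar H\"ormander/Dini condition --- generates a geometric series in $k$ whose convergence forces $\alpha<1-\tfrac{1}{1+\varepsilon(w,v)}$; summing the series gives yet another term bounded by $C\|b\|_{Lip_{\beta,\mu}}\mu(x)^{1+\alpha}M^+_r f(x)$.

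Putting the three pieces together, taking the supremum over $h>0$, raising to the $p$-th power and integrating against $w$, the identity $w\cdot \mu^{p(1+\alpha)}=v$ collapses the weights and reduces the problem to the $L^p(v)$-boundedness of $M^+$, $S^+$, and $M^+_r$, which is standard under $v\in A_p$. The main obstacle is term $\mathrm{III}$: unlike the scalar Calder\'on--Zygmund kernel of $T^+$ exploited in Theorem \ref{th:1}, the sequence $H$ is piecewise constant with jump discontinuities, so controlling $\|U^+((b-b_{I^*})f_2)(y)-U^+((b-b_{I^*})f_2)(x)\|_{l^2}$ requires an $l^2$-cancellation argument between consecutive coordinates of $H$. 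The limited decay afforded by this cancellation is precisely what narrows the admissible range of $\alpha$ through the parameter $\varepsilon(w,v)$, and tracking the sharp threshold is the delicate technical step.
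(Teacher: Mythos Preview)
Your three-term decomposition into $\mathrm{I},\mathrm{II},\mathrm{III}$ matches the paper's $L,LL,LLL$, and your identification of the kernel smoothness of $H$ as the source of the restriction on $\alpha$ is correct. The genuine gap is in how you pass from the local oscillation to the $L^p$ estimate. You claim a pointwise bound of the shape $C\|b\|_{Lip_{\alpha,\mu}}\,\mu(x)^{1+\alpha}M^+g(x)$ (with $g=S^+f$ for term $\mathrm{I}$, and $g$ a variant of $f$ for $\mathrm{II},\mathrm{III}$), to be combined with $w\mu^{p(1+\alpha)}=v$. But this pointwise bound does not follow from the ingredients you list. In particular, the inequality ``$h\lesssim \mu(I^*)/\mu(x)$'' goes in the \emph{wrong} direction: the $A_1$ condition gives $\mu(I^*)/|I^*|\le C\mu(x)$, i.e.\ $\mu(I^*)\lesssim h\,\mu(x)$, not the reverse. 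If you instead use the $Lip^\infty_{\alpha,\mu}$ characterization $|b(y)-b_{I^*}|\le C\|b\|\,\mu(I^*)^\alpha\mu(y)$, the factor $\mu(y)$ lands on $g$ and you are left with $\mu(x)^\alpha M^+(\mu g)(x)$, which after integration requires $v\mu^{-p}\in A_p^+$ --- not a consequence of $v\in A_p$ and $\mu\in A_1$.

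The paper avoids this obstruction by \emph{not} attempting a direct pointwise-to-$L^p$ argument. Instead it introduces auxiliary weights $\tau,\sigma$ with $\sigma=\mu^{1+\alpha}\tau$, $\tau^{-1}\in A_1^{-}$ and $\sigma^{-1}\in A_1$, and proves the endpoint $L^\infty$ estimate $\|\tau M_i^+f\|_\infty\le C\|\sigma f\|_\infty$ for each of the three maximal operators (this is where Lemma~\ref{th:lem3} does the work, exploiting both $\mu\in A_1$ and $\tau^{-r}\in A_1^{-}$ simultaneously). The desired $L^p(w)\to L^p(v)$ bound then follows from the one-sided extrapolation Lemma~\ref{th:lem2}. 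This extrapolation step is the missing idea in your proposal; without it, the interplay between the weighted Lipschitz norm and the pair $(w,v)$ cannot be untangled in the way you describe.
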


\begin{remark}
In Theorem \ref{th:2}, $\varepsilon(w,v)$ is a positive number depending only on $w,v$. Since the condition that
is satisfied by $H$ in \eqref{U} is weaker than that of
Calder\'on-Zygmund kernel $K$ (see \cite{TT}). Naturally, the requirement of $\alpha$ in Theorem \ref{th:2} should be stronger.
\end{remark}

We remark that although like \cite{LR2}, \cite{LR3}, we will continue to use the one-sided
maximal functions to control the commutators of the two operators in this paper. The difference is that,
 by definition of one-sided Tiebel-Lizorkin spaces, the proof in this paper goes without using of one-sided
 sharp maximal operators.

In Section 2, we will give some necessary lemmas.
Then we will prove Theorem \ref{th:1} in Section 3. In the last section, we will give the proof of Theorem \ref{th:2}.
Throughout this paper the letter $C$ will be used to denote various constants, and the various uses of the letter
do not, however, denote the same constant.

\section{Preliminaries}
In order to prove our results, we will firstly introduce some necessary lemmas.
\begin{lemma}\cite{MPT}\label{th:lem1}
Suppose that $\omega\in A_1^{-}$, then there exists $\varepsilon_1>0$ such that for all $1<r\leq 1+\varepsilon_1$,
$w^r\in A_1^{-}$.
\end{lemma}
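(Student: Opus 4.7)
The plan is to reduce the statement to a \emph{one-sided reverse Hölder inequality} for $A_\infty^-$ weights: if $w\in A_1^-\subset A_\infty^-$, then there should exist $\delta>0$ and $C>0$ such that for every interval $I\subset\mathbb{R}$,
\[
\left(\frac{1}{|I|}\int_I w^{1+\delta}\right)^{\frac{1}{1+\delta}} \leq \frac{C}{|I|}\int_I w.
\]
Granted this, fix $r\in(1,1+\delta]$. By Jensen's inequality on the probability space $I$, the reverse Hölder estimate upgrades to $\frac{1}{|I|}\int_I w^r\leq C\bigl(\frac{1}{|I|}\int_I w\bigr)^r$. Specializing $I=(x,x+h)$ and invoking the $A_1^-$ hypothesis $\frac{1}{h}\int_x^{x+h}w\leq C_0\, w(x)$ a.e.\ yields
\[
\frac{1}{h}\int_x^{x+h} w^r \leq C\,C_0^r\,w(x)^r\quad\text{a.e.}
\]
Taking the supremum over $h>0$ gives $M^+(w^r)\leq C'w^r$ a.e., which is exactly $w^r\in A_1^-$ with $\varepsilon_1=\delta$.

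The substantial step is thus the reverse Hölder inequality. The strategy, following the classical Coifman--Fefferman approach adapted to the one-sided setting as in \cite{MR}, is to first extract from $A_1^-$ the quantitative $A_\infty^-$ statement that $w(E)$ is a small fraction of $w(I)$ whenever $E\subset I$ has small Lebesgue measure (with the averages taken over one-sided extensions of $I$). One then runs a Calderón--Zygmund stopping-time decomposition of $I$ at a level $\lambda$, using stopping intervals tailored to the one-sided maximal operator $M^-$, and deduces a good-$\lambda$ inequality of the form
\[
w(\{w>2\lambda\}\cap I)\leq \theta\, w(\{w>\lambda\}\cap I)
\]
with $\theta<1$. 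Integrating this against $\lambda^{\delta-1}\,d\lambda$ produces the reverse Hölder inequality with explicit exponent $\delta=\log(1/\theta)/\log 2$.

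The main obstacle is orienting the stopping-time argument correctly: the one-sided $A_\infty^-$ condition only controls averages of $w$ over intervals extended to a prescribed side, so the Calderón--Zygmund intervals and their extensions must be chosen consistently in order for the good-$\lambda$ bookkeeping to close. This is precisely what is handled in \cite{MPT}, from which the stated lemma is drawn; the self-improvement $w\in A_1^-\Rightarrow w^r\in A_1^-$ for $r$ close to $1$ is then the natural one-sided analogue of the classical Gehring-type self-improvement of $A_1$ weights.
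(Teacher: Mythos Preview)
The paper does not supply its own proof of this lemma; it is quoted from \cite{MPT} without argument, so there is nothing in the paper to compare your sketch against.

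That said, your reduction contains a genuine error. The reverse H\"older inequality you display,
\[
\left(\frac{1}{|I|}\int_I w^{1+\delta}\right)^{1/(1+\delta)} \leq \frac{C}{|I|}\int_I w \quad\text{for every interval }I,
\]
is the ordinary (two-sided) reverse H\"older inequality, and it is \emph{false} for general $A_1^-$ weights. Take $w(x)=e^{-x}$: one checks directly that $M^+w=w$, so $w\in A_1^-$ with constant $1$; but on $I=[0,L]$ the left-hand side behaves like $L^{-1/(1+\delta)}$ while the right-hand side behaves like $L^{-1}$, so their ratio is of order $L^{\delta/(1+\delta)}\to\infty$. Thus the implication ``$A_1^-\subset A_\infty^-\Rightarrow$ reverse H\"older on every $I$'' fails, and with it your chain $\frac{1}{|I|}\int_I w^r\leq C\bigl(\frac{1}{|I|}\int_I w\bigr)^r\leq C'w(x)^r$.

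What \cite{MPT} actually establishes is a genuinely \emph{one-sided} reverse H\"older inequality, in which the $L^{1+\delta}$ average over one half of an interval is controlled by the $L^1$ average over the adjacent half (equivalently, over the doubled interval). With that asymmetric estimate the passage to $M^+(w^r)\leq C'w^r$ is no longer the two-line computation you give: after specializing to $(x,x+h)$, the controlling $L^1$ average sits on the \emph{wrong} side of $x$ and is not dominated by $w(x)$ via the $A_1^-$ hypothesis alone. The argument in \cite{MPT} handles this extra step; your sketch does not. You acknowledge in your final paragraph that the orientation is delicate, but the displayed inequality you reduce everything to is precisely the one that ignores it.
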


The primary tool in our proofs is an extrapolation theorem appeared in \cite{LR3}.
\begin{lemma}\cite{LR3}\label{th:lem2}
Let $\nu$ be a weight and $T$ a sublinear operator defined in $C_{c}^{\infty}(\mathbb{R})$ and satisfying
$$\|\tau Tf\|_\infty\leq C\|\sigma f\|_\infty,$$
for all $\tau$ and $\sigma$ such that $\sigma=\nu\tau$, $\tau^{-1}\in A_1^{-}$ and $\sigma^{-1}\in A_1$. Then
for $1<p<\infty$,
$$\|Tf\|_{L^p(w)}\leq C\|f\|_{L^p(v)},$$
holds whenever $w\in A_p^{+}$ and $v=\nu^pw\in A_p$.
\end{lemma}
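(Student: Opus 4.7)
The plan is to run a one-sided Rubio de Francia extrapolation. Given $f\in L^p(v)$, I will manufacture a single majorant $\omega\ge |f|$ that satisfies both $\omega\in A_1$ and $\nu\omega\in A_1^-$; then the pair $\sigma=1/\omega$, $\tau=\sigma/\nu$ meets every structural condition in the hypothesis, and the resulting $L^\infty$ estimate integrates to the desired $L^p$ bound.

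To build $\omega$, I will introduce the combined sublinear operator
$$Sh:=Mh+\nu^{-1}M^+(\nu h)$$
on nonnegative $h$, whose structure is arranged so that a single inequality $S\omega\le K\omega$ automatically splits into $M\omega\le K\omega$ (forcing $\omega\in A_1$) and $M^+(\nu\omega)\le K\nu\omega$ (forcing $\nu\omega\in A_1^-$). The first non-trivial step is to verify that $S$ is bounded on $L^p(v)$. Muckenhoupt's $A_p$ theorem handles $M$ since $v\in A_p$. For the cross term, substituting $k=\nu h$ and using the identity $\nu^{-p}v=w$ (a direct consequence of $v=\nu^p w$) gives
$$\|\nu^{-1}M^+(\nu h)\|_{L^p(v)}=\|M^+k\|_{L^p(w)}\le\|M^+\|_{L^p(w)\to L^p(w)}\,\|k\|_{L^p(w)}=\|M^+\|_{L^p(w)}\,\|h\|_{L^p(v)},$$
and Sawyer's one-sided theorem, applied to $w\in A_p^+$, bounds $\|M^+\|_{L^p(w)\to L^p(w)}<\infty$. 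Let $K$ denote the resulting operator norm of $S$ on $L^p(v)$.

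With $S$ bounded, the Rubio de Francia iterate
$$\omega:=\sum_{k=0}^{\infty}\frac{S^k(|f|)}{(2K)^k}$$
has the three standard properties $|f|\le\omega$, $\|\omega\|_{L^p(v)}\le 2\|f\|_{L^p(v)}$, and $S\omega\le 2K\omega$; the last inequality, read through the two halves of $S$, places $\omega\in A_1$ and $\nu\omega\in A_1^-$ with constants depending only on $K$. Setting $\sigma=1/\omega$ and $\tau=1/(\nu\omega)$ yields $\sigma=\nu\tau$, $\sigma^{-1}=\omega\in A_1$, $\tau^{-1}=\nu\omega\in A_1^-$, so the hypothesis applies to this pair and gives
$$\|\tau Tf\|_\infty\le C\|\sigma f\|_\infty=C\|f/\omega\|_\infty\le C,$$
since $|f|\le\omega$ pointwise. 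Rearranged, this reads $|Tf(x)|\le C\,\nu(x)\,\omega(x)$ almost everywhere.

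Raising to the $p$-th power against $w$ and invoking $v=\nu^p w$ once more,
$$\int|Tf|^p w\,dx\le C^p\int(\nu\omega)^p w\,dx=C^p\int\omega^p v\,dx\le(2C)^p\|f\|_{L^p(v)}^p,$$
which is the desired inequality. The principal obstacle is the cross-term bound in the very first step: both hypotheses ($v\in A_p$ for $M$ and $w\in A_p^+$ for $M^+$) are indispensable, and they interact through the algebraic identity $v=\nu^p w$; this compatibility is exactly what permits the Rubio de Francia iteration to be run in a single weighted space. A minor technicality is to ensure $\omega>0$ almost everywhere, so that the reciprocal weights $\sigma,\tau$ are well defined; this is handled by replacing $|f|$ by $|f|+\varepsilon\chi_E$ on a bounded set $E$ and passing to $\varepsilon\to 0^+$ at the end. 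Implicit in the argument is the standard convention that the constant $C$ in the hypothesis depends only on the $A_1$ and $A_1^-$ constants of $\sigma^{-1}$ and $\tau^{-1}$, which in our construction are uniformly bounded by $2K$.
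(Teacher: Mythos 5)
Your argument is correct: the iterate $\omega=\sum_k S^k(|f|)/(2K)^k$ with $Sh=Mh+\nu^{-1}M^{+}(\nu h)$ is bounded on $L^p(v)$ exactly because $v\in A_p$ handles $M$ and the identity $\nu^{-p}v=w$ together with $w\in A_p^{+}$ (Sawyer) handles the cross term, and the resulting pair $\sigma=1/\omega$, $\tau=1/(\nu\omega)$ satisfies $\sigma=\nu\tau$, $\sigma^{-1}\in A_1$, $\tau^{-1}\in A_1^{-}$, so the $L^\infty$ hypothesis integrates to the claimed $L^p(w)$--$L^p(v)$ bound. The paper itself gives no proof of this lemma (it is quoted from Lorente--Riveros \cite{LR3}), and your Rubio de Francia construction is essentially the argument used there, with the standard caveat you already note: the constant $C$ in the hypothesis must be uniform over pairs with controlled $A_1$ and $A_1^{-}$ constants, which your construction guarantees via the bound $2K$.
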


Based on Lemma 2.3 in \cite{LR3}, we get the following estimate which is essential to the proofs of Theorem 1.1 and 1.2.
\begin{lemma}\label{th:lem3}
Let $0<\alpha<1$, $\mu\in A_1$ and $b\in Lip_{\alpha,\mu}$.
Assume that $\tau$ and $\sigma=\mu^{1+\alpha}\tau$ are such that
$\tau^{-1}\in A_1^{-}$ and $\sigma^{-1}\in A_1$. Then there exists $\varepsilon_2>0$ such that for all $1<r<1+\varepsilon_2$,
$$\frac{1}{|I|^{\alpha}}\left(\frac{1}{|I|}\int_{I}|b(y)-b_I|^{r}\sigma^{-r}dy\right)^{1/r}
\leq C\|b\|_{Lip_{\alpha},\mu}\tau^{-1}(x),\quad a.e.~~ x\in\mathbb{R}. $$
where $I=[x,x+h]$.
\end{lemma}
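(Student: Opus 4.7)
The plan is to exploit the factorization $\sigma^{-r} = \mu^{-r(1+\alpha)}\tau^{-r}$ together with Hölder's inequality, splitting the integrand on $I = [x, x+h]$ into one piece governed by the weighted Lipschitz norm of $b$ (against $\mu$) and a second piece depending only on $\tau$; the second will then be controlled pointwise by $\tau^{-1}(x)$ via Lemma~\ref{th:lem1}.

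First I would apply Hölder with exponents $q,q'$ to obtain
$$\frac{1}{|I|}\int_{I}|b-b_{I}|^{r}\sigma^{-r}\,dy \le \Big(\frac{1}{|I|}\int_{I}|b-b_{I}|^{rq}\mu^{-r(1+\alpha)q}\,dy\Big)^{1/q}\Big(\frac{1}{|I|}\int_{I}\tau^{-rq'}\,dy\Big)^{1/q'}.$$
I would fix $q$ sufficiently large that $q'$ is close to $1$, and then choose $\varepsilon_{2}>0$ small enough that every $r\in(1,1+\varepsilon_{2})$ satisfies $rq'\le 1+\varepsilon_{1}$, where $\varepsilon_{1}$ comes from applying Lemma~\ref{th:lem1} to $\omega=\tau^{-1}\in A_{1}^{-}$. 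Then $\tau^{-rq'}\in A_{1}^{-}$, so the $A_{1}^{-}$ property gives $\frac{1}{|I|}\int_{I}\tau^{-rq'}\,dy\le M^{+}(\tau^{-rq'})(x)\le C\tau^{-rq'}(x)$ a.e.\ $x$, and hence the second factor is at most $C\tau^{-r}(x)$.

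For the first factor, I would split $\mu^{-r(1+\alpha)q}=\mu^{1-rq}\cdot\mu^{-r\alpha q-1}$ and use that $\mu\in A_{1}$ yields the pointwise bound $\mu(y)^{-1}\le C|I|/\mu(I)$ a.e.\ $y\in I$, so that $\mu^{-r\alpha q-1}(y)\le C(|I|/\mu(I))^{r\alpha q+1}$ can be pulled out of the integral. The remaining integrand $|b-b_{I}|^{rq}\mu^{1-rq}$ is precisely the one appearing in the definition of $\|b\|_{Lip_{\alpha,\mu}^{rq}}$, which, as the paper notes citing \cite{GC}, is equivalent to $\|b\|_{Lip_{\alpha,\mu}}$ when $\mu\in A_{1}$. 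After the $\mu(I)$-factors cancel, this yields
$$\Big(\frac{1}{|I|}\int_{I}|b-b_{I}|^{rq}\mu^{-r(1+\alpha)q}\,dy\Big)^{1/q}\le C\|b\|_{Lip_{\alpha,\mu}}^{r}\,|I|^{r\alpha}.$$

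Multiplying the two bounds, taking the $1/r$-th root, and dividing by $|I|^{\alpha}$ produces the claimed inequality. The main subtlety is the parameter book-keeping at the beginning: $q$ must be large enough that $q'<1+\varepsilon_{1}$ so that a nontrivial interval of $r>1$ keeps $rq'$ inside the reverse-Hölder window of Lemma~\ref{th:lem1}, and this is exactly what furnishes the required $\varepsilon_{2}>0$. Every other step --- the Hölder application, the pointwise $A_{1}$ control on $\mu^{-1}$, and the equivalence of weighted Lipschitz norms across exponents --- is routine.
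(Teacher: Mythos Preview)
Your argument is correct, but it proceeds along a different line from the paper's proof. The paper does not apply H\"older's inequality at all: instead it bounds $|b(y)-b_I|$ pointwise by the local oscillation $\sup_{J:\,y\in J\subset I}\frac{1}{|J|}\int_J|b-b_J|$, then inserts the factor $(\mu(J)/|J|)^{1+\alpha}$ so that the $p=1$ weighted Lipschitz norm appears directly; since $\mu\in A_1$ gives $\mu(J)/|J|\le C\mu(y)$, the resulting $\mu(y)^{(1+\alpha)r}$ cancels exactly against $\sigma^{-r}(y)=\mu(y)^{-(1+\alpha)r}\tau^{-r}(y)$, and a single appeal to $\tau^{-r}\in A_1^{-}$ (Lemma~\ref{th:lem1} at exponent $r$) finishes. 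Your route replaces that pointwise oscillation step by the H\"older split and then invokes the equivalence $\|\cdot\|_{Lip_{\alpha,\mu}^{rq}}\sim\|\cdot\|_{Lip_{\alpha,\mu}}$ from \cite{GC}; the price is that you need $\tau^{-rq'}\in A_1^{-}$ rather than just $\tau^{-r}\in A_1^{-}$, which forces a smaller $\varepsilon_2$, but as you note this is harmless. In short, the paper's proof is shorter once the pointwise bound is accepted and uses only the $p=1$ Lipschitz norm, while yours is arguably more transparent because it avoids that bound and relies only on H\"older and standard weight facts.
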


\begin{proof}
Since $\tau^{-1}\in A_1^{-}$, by Lemma \ref{th:lem1}, there exists $\varepsilon_1>0$ such that for all
$1<r\leq 1+\varepsilon_1$,
$\tau^{-r}\in A_1^{-}$. By the fact that $\mu\in A_1$, we have
\begin{equation*}\begin{split}
&\frac{1}{|I|^{\alpha}}\left(\frac{1}{|I|}\int_{I}|b(y)-b_I|^{r}\sigma^{-r}(y)dy\right)^{1/r}\\
&\leq\frac{1}{|I|^{\alpha}}\left\{\frac{1}{|I|}\int_{I}\sup_{J\ni y\atop
J\subset I}
\left(\frac{1}{|J|}\int_J|b(t)-b_J|dt\right)^r\sigma^{-r}(y)dy\right\}^{1/r}\\
&\leq\frac{1}{|I|^{\alpha}}\left\{\frac{1}{|I|}\int_{I}\sup_{J\ni y\atop
J\subset I}\left(\frac{\mu(J)}{|J|}\right)^{(1+\alpha)r}|J|^{\alpha r}
\left(\frac{1}{\mu(J)^{1+\alpha}}\int_J|b(t)-b_J|dt\right)^r\sigma^{-r}(y)dy\right\}^{1/r}\\
&\leq C\|b\|_{Lip_{\alpha,\mu}}\frac{1}{|I|^{\alpha}}\left\{\frac{1}{|I|}\int_{I}|I|^{\alpha r}
\mu(y)^{(1+\alpha)r}\sigma^{-r}(y)dy\right\}^{1/r}\\
&\leq C\|b\|_{Lip_{\alpha,\mu}}\left\{\frac{1}{|I|}\int_{I}
\tau^{-r}(y)dy\right\}^{1/r}\\
&\leq C\|b\|_{Lip_{\alpha,\mu}}\tau^{-1}(x),
\end{split}\end{equation*}
for almost all $x\in\mathbb{R}$.
\end{proof}

\begin{lemma}\label{th:lem4}
Assume that $b\in Lip_{\alpha,\mu}$, $\mu\in A^1$, $x\in\mathbb{R}$ and $h>0$. For each $j\in\mathbb{Z}^+$, let
$I_j=[x,x+2^jh]$, $j\geq 3$. Then
$$\frac{1}{h^{\alpha}}|b_{I_{j+1}}-b_{I_3}|\leq C\|b\|_{Lip_{\alpha},\mu}
\frac{2^{4\alpha}(1-2^{(j-2)\alpha})}{1-2^{\alpha}}\mu(x)^{1+\alpha}.$$
\end{lemma}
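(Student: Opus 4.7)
The plan is to telescope $b_{I_{j+1}} - b_{I_3}$ across the dyadic chain of intervals and bound each consecutive difference by the $Lip_{\alpha,\mu}$ norm, then collect geometric factors in $2^\alpha$.

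First, write
\begin{equation*}
b_{I_{j+1}} - b_{I_3} = \sum_{k=3}^{j} \bigl(b_{I_{k+1}} - b_{I_k}\bigr),
\end{equation*}
and note that $I_k \subset I_{k+1}$ with $|I_{k+1}| = 2|I_k|$, so that
\begin{equation*}
|b_{I_{k+1}} - b_{I_k}| \leq \frac{1}{|I_k|}\int_{I_k}|b(y) - b_{I_{k+1}}|\,dy \leq \frac{2}{|I_{k+1}|}\int_{I_{k+1}}|b(y) - b_{I_{k+1}}|\,dy.
\end{equation*}
By the definition of $Lip_{\alpha,\mu} = Lip_{\alpha,\mu}^1$, the last integral is at most $\|b\|_{Lip_{\alpha,\mu}}\,\mu(I_{k+1})^{1+\alpha}$, hence
\begin{equation*}
|b_{I_{k+1}} - b_{I_k}| \leq 2\|b\|_{Lip_{\alpha,\mu}}\,\frac{\mu(I_{k+1})^{1+\alpha}}{|I_{k+1}|}.
\end{equation*}

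Next, I would invoke $\mu \in A_1$, which gives $\mu(I_{k+1})/|I_{k+1}| \leq C\mu(x)$ for a.e. $x$ in $I_{k+1}$ (in particular for the left endpoint $x$, valid a.e.). Writing
\begin{equation*}
\frac{\mu(I_{k+1})^{1+\alpha}}{|I_{k+1}|} = \left(\frac{\mu(I_{k+1})}{|I_{k+1}|}\right)^{1+\alpha}|I_{k+1}|^{\alpha} \leq C\,\mu(x)^{1+\alpha}\,(2^{k+1}h)^{\alpha},
\end{equation*}
and dividing by $h^{\alpha}$, one obtains
\begin{equation*}
\frac{1}{h^{\alpha}}|b_{I_{k+1}} - b_{I_k}| \leq C\|b\|_{Lip_{\alpha,\mu}}\,\mu(x)^{1+\alpha}\,2^{(k+1)\alpha}.
\end{equation*}

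Finally, summing in $k$ from $3$ to $j$ produces the geometric series
\begin{equation*}
\sum_{k=3}^{j} 2^{(k+1)\alpha} = 2^{4\alpha}\,\frac{2^{(j-2)\alpha}-1}{2^{\alpha}-1} = \frac{2^{4\alpha}\bigl(1 - 2^{(j-2)\alpha}\bigr)}{1 - 2^{\alpha}},
\end{equation*}
which matches the claimed bound exactly. The only delicate point is the application of $A_1$: one must ensure that the pointwise estimate $\mu(I)/|I| \leq C\mu(x)$ is valid for the specific $x$ that serves as the common left endpoint of all the $I_k$, but since $A_1$ furnishes this a.e., the conclusion holds for a.e.\ $x$ (which is all that will be needed in the applications of the lemma). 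Beyond this, the argument is a routine telescoping plus geometric summation, so I do not anticipate any substantive obstacle.
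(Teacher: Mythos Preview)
Your proof is correct and follows essentially the same route as the paper: telescope across consecutive dyadic intervals, bound $|b_{I_k}-b_{I_{k+1}}|$ via $\frac{1}{|I_k|}\int_{I_k}|b-b_{I_{k+1}}|$, use the $Lip_{\alpha,\mu}$ definition together with $\mu\in A_1$ to extract the factor $2^{(k+1)\alpha}\mu(x)^{1+\alpha}$, and then sum the geometric series. Your explicit mention that the $A_1$ pointwise bound holds only for a.e.\ $x$ is a useful clarification that the paper leaves implicit.
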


\begin{proof}
Since $\mu\in A^1$, we have
\begin{equation*}\begin{split}
\frac{1}{h^{\alpha}}|b_{I_{m}}-b_{I_{m+1}}|
&\leq\frac{1}{h^{\alpha}}\frac{1}{|I_m|}\int_{I_m}|b(t)-b_{I_{m+1}}|dt\\
&\leq C 2^{(m+1)\alpha}\left(\frac{\mu(I_{m+1})}{|I_{m+1}|}\right)^{1+\alpha}\|b\|_{Lip_{\alpha},\mu}\\
&\leq C 2^{(m+1)\alpha}\|b\|_{Lip_{\alpha},\mu}\mu(x)^{1+\alpha}.
\end{split}\end{equation*}
Therefore,
\begin{equation*}\begin{split}
\frac{1}{h^{\alpha}}|b_{j+1}-b_{I_3}|&=\sum_{m=3}^{j}|b_{I_m}-b_{I_{m+1}}|\\
&\leq C\|b\|_{Lip_{\alpha},\mu}\mu(x)^{1+\alpha}\sum_{m=3}^{j}2^{(m+1)\alpha}\\
&\leq C\|b\|_{Lip_{\alpha},\mu}
\frac{2^{4\alpha}(1-2^{(j-2)\alpha})}{1-2^{\alpha}}\mu(x)^{1+\alpha}.
\end{split}\end{equation*}
The lemma is proved.
\end{proof}

Using some notations of \cite{LR1}, \cite{LR2} and \cite{LR3}, we will prove Theorem 1.1 and Theorem 1.2, respectively.

\section{Weighted estimates for commutators of one-sided singular integrals}

\begin{proof}[Proof of Theorem $\ref{th:1}$]
Let $\lambda$ be an arbitrary constant. Then
$$T_{b}^{+}f(x)= T^{+}((\lambda-b)f)(x)+(b(x)-\lambda)
T^{+}f(x).$$
Let $x\in\mathbb{R}$, $h>0$, $J=[x,x+8h]$. Write $f=f_1+f_2$, where $f_1=f\chi_J$, set $\lambda=b_J$.
Then
\begin{equation*}\begin{split}
&\frac{1}{h^{1+\alpha}}\int_{x}^{x+2h}|T^+_{b}f(y)-(T^+_{b}f)_{[x, x+2h]}|dy\\
&\leq\frac{2}{h^{1+\alpha}}\int_{x}^{x+2h}|T_{b}^{+}f(y)-T^{+}((b-b_{J})f_{2})(x+2h)|dy\\
&\leq\frac{2}{h^{1+\alpha}}\int_{x}^{x+2h}|T^{+}((b-b_{J})f_{1})(y)|dy \\
&+
\frac{2}{h^{1+\alpha}}\int_{x}^{x+2h}|T^{+}((b-b_{J})f_{2})(y)
-T^{+}((b-b_{J})f_{2})(x+2h)|dy\\
&+ \frac{2}{h^{1+\alpha}}\int_{x}^{x+2h}|b(y)-b_{J}||T^{+}f(y)|dy\\
&=2(I(x)+II(x)+III(x)).
\end{split}\end{equation*}
By definition of Calder\'on-Zygmund kernel, we have
$$II(x)\leq C\frac{1}{h^{1+\alpha}}\int_{x}^{x+2h}\int_{x+8h}^{\infty}\frac{x+2h-y}{(t-(x+2h))^{2}}|b(t)-b_{J}||f(t)|dtdy.$$

Consider the following three sublinear operators defined on $C_c^{\infty}$:
\begin{equation}\begin{split}
&M^{+}_{1}f(x)=\sup_{h>0}\frac{1}{h^{1+\alpha}}\int_{x}^{x+2h}|T^{+}((b-b_{J})f\chi_{J})(y)|dy,\\
&M^{+}_{2}f(x)=\sup_{h>0}\frac{1}{h^{1+\alpha}}\int_{x}^{x+2h}\int_{x+8h}^{\infty}\frac{x+2h-y}{(t-(x+2h))^{2}}|b(t)-b_{J}||f(t)|dtdy,
\\
&M^{+}_{3}g(x)=\sup_{h>0}\frac{1}{h^{1+\alpha}}\int_{x}^{x+2h}|b(y)-b_{[x,x+8h]}||g(y)|dy.\label{MMM}
\end{split}\end{equation}
The above inequalities imply that
\begin{equation}\begin{split}
&\frac{1}{h^{1+\alpha}}\int_{x}^{x+2h}|T_{b}^{+}f(y)-(T_{b}^{+}f)_{[x,
x+2h]}|dy\\
&\quad\leq
C\left(M^{+}_{1}f(x)+M^{+}_{2}f(x)+M^{+}_{3}(T^{+}f)(x)\right).\label{co}
\end{split}\end{equation}

Now, let's discuss the boundedness of these three operators. For $M^+_{1}$.
Assume that $\tau$ and $\sigma=\mu^{1+\alpha}\tau$ are such that $\tau^{-1}\in A^{-}_{1}$ and $\sigma^{-1}\in
A_{1}$. Let $1<r<1+\varepsilon_2$, where $\varepsilon_2$ is as in Lemma \ref{th:lem3}. By H\"older's inequality, Lemma 
\ref{th:lem3} and the fact that $T^{+}$ is bounded from
$L^{r}(\mathbb{R})$ to $L^{r}(\mathbb{R})$ \cite{AF}, we get
\begin{equation*}\begin{split}
&\frac{1}{h^{1+\alpha}}\int_{x}^{x+2h}|T^{+}((b-b_{J})f\chi_{J})(y)|dy\\
&\leq \frac{C}{h^{\alpha}}\left(\frac{1}{h}\int_{x}^{x+2h}|T^{+}((b-b_{J})f\chi_{J})(y)|^{r}dy\right)^{1/r}\\
&\leq
\frac{C}{h^{\alpha}}\left(\frac{1}{h}\int_{x}^{x+8h}|(b(y)-b_{J})f(y)|^{r}dy\right)^{1/r}\\
&\leq C\|f\sigma\|_{\infty}\frac{1}{h^{\alpha}}\left(\frac{1}{h}\int_{x}^{x+8h}|b(y)-b_J|^{r}\sigma^{-r}(y)dy\right)^{1/r}\\
&\leq
C\|b\|_{Lip_{\alpha},\mu}\|f\sigma\|_{\infty}\tau^{-1}(x).
\end{split}\end{equation*}
Therefore,
$$\|\tau M_1^{+}f\|_\infty\leq C\|f\sigma\|_{\infty}.$$
Then by Lemma \ref{th:lem2}, for $w\in A_p^{+}$ and $v=\mu^{(1+\alpha)p}w\in A_p$, we have
\begin{equation}
\|M_{1}^{+}f\|_{L^{p}(w)}\leq
C\|f\|_{L^{p}(v)}.\label{M1}
\end{equation}

For $M_2^{+}$, let $I_j=[x,x+2^jh]$, $j\in\mathbb{Z}^+$. Then
\begin{equation}\begin{split}
&
\frac{1}{h^{1+\alpha}}\int_{x}^{x+2h}\int_{x+8h}^{\infty}\frac{x+2h-y}{(t-(x+2h))^{2}}|b(t)-b_{J}||f(t)|dtdy\\
&\leq
\frac{C}{h^{\alpha}}\int_{x}^{x+2h}\sum_{j=3}^{\infty}\int_{x+2^{j}h}^{x+2^{j+1}h}
\frac{|b(t)-b_{J}|}{(t-(x+2h))^{2}}|f(t)|dtdy\\
&\leq C
\sum_{j=3}^{\infty}\frac{1}{(2^{j}-2)^{2}h^{1+\alpha}}\int_{x+2^{j}h}^{x+2^{j+1}h}
|b(t)-b_{J}||f(t)|dt\\
&\leq C
\sum_{j=3}^{\infty}\frac{2^{j+1}}{(2^{j}-2)^{2}}\Biggl(\frac{1}{2^{j+1}h^{1+\alpha}}\int_{I_{j+1}}
|b(t)-b_{I_{j+1}}||f(t)|dt\\
&\qquad+\dfrac{1}{2^{j+1}h^{1+\alpha}}\int_{I_{j+1}}
|b_{I_{j+1}}-b_{J}||f(t)|dt\Biggr)\\
&= C
\sum_{j=3}^{\infty}\frac{1}{2^{j}}\big(II_{1}(x)+II_{2}(x)\big).\label{M21}
\end{split}\end{equation}

By H\"{o}lder's inequality and Lemma \ref{th:lem3}, we have

\begin{equation}\begin{split}
II_{1}(x)&=\frac{1}{2^{j+1}h^{1+\alpha}}\int_{I_{j+1}}
|b(t)-b_{I_{j+1}}||f(t)|dt\\
&\leq
\frac{1}{h^{\alpha}}\left(\frac{1}{2^{j+1}h}
\int_{I_{j+1}}|b(t)-b_{I_{j+1}}|^r|f|^{r}dt\right)^{1/r}
\\&\leq
\|f\sigma\|_{\infty}\frac{1}{h^{\alpha}}\left(\frac{1}{2^{j+1}h}
\int_{I_{j+1}}|b(t)-b_{I_{j+1}}|^r\sigma^{-r}dt\right)^{1/r}\\
&\leq
C2^{(j+1)\alpha}\|b\|_{Lip_{\alpha,\mu}}\|f\sigma\|_{\infty}\tau^{-1}(x).\label{2-1}
\end{split}\end{equation}

Since $\sigma^{-1}\in A_1$, then by Lemma \ref{th:lem4},
\begin{equation}\begin{split}
II_{2}(x)&=\frac{1}{2^{j+1}h^{1+\alpha}}\int_{I_{j+1}}
|b_{I_{j+1}}-b_{J}||f(t)|dt\\
&\leq \frac{1}{h^\alpha}|b_{I_{j+1}}-b_{J}|\|f\sigma\|_{\infty}
\frac{1}{|I_{j+1}|}\int_{I_{j+1}}
\sigma^{-1}dt\\
&\leq C\|b\|_{Lip_{\alpha},\mu}
\frac{2^{4\alpha}(1-2^{(j-2)\alpha})}{1-2^{\alpha}}\mu(x)^{1+\alpha}
\|f\sigma\|_{\infty}\sigma^{-1}(x)\\
&= C\|b\|_{Lip_{\alpha},\mu}
\frac{2^{4\alpha}(1-2^{(j-2)\alpha})}{1-2^{\alpha}}
\|f\sigma\|_{\infty}\tau^{-1}(x).\label{2-2}
\end{split}\end{equation}
Then \eqref{M21}-\eqref{2-2} indicate that
\begin{equation}\begin{split}
&
\frac{1}{h^{1+\alpha}}\int_{x}^{x+2h}\int_{x+8h}^{\infty}\frac{x+2h-y}{(t-(x+2h))^{2}}|b(t)-b_{J}||f(t)|dtdy\\
&\leq
C\|b\|_{Lip_{\alpha,\mu}}\|f\sigma\|_{\infty}\tau^{-1}(x)
\sum_{j=3}^{\infty}\frac{1}{2^{j}}\left(2^{(j+1)\alpha}
+
\frac{2^{4\alpha}(1-2^{(j-2)\alpha})}{1-2^{\alpha}}
\right)\\
&\leq C\|b\|_{Lip_{\alpha,\mu}}\|f\sigma\|_{\infty}\tau^{-1}(x),
\end{split}\end{equation}
where the last inequality is due to the fact that $0<\alpha<1$. Consequently,
$$\|\tau M_2^{+}f\|_\infty\leq C\|f\sigma\|_{\infty}.$$
Then by Lemma \ref{th:lem2}, for $w\in A_p^{+}$ and $v=\mu^{(1+\alpha)p}w\in A_p$, we have
\begin{equation}
\|M_{2}^{+}f\|_{L^{p}(w)}\leq
C\|f\|_{L^{p}(v)}.\label{M2}
\end{equation}

For $M^+_{3}$.
 By H\"older's inequality and Lemma \ref{th:lem4}, we get
\begin{equation*}\begin{split}
&\frac{1}{h^{1+\alpha}}\int_{x}^{x+2h}|b(y)-b_{J}||g(y)|dy\\
&\leq \frac{C}{h^{\alpha}}\left(\frac{1}{h}\int_{x}^{x+2h}|b(y)-b_{J}|^{r}|g(y)|^{r}dy\right)^{1/r}\\
&\leq C\|g\sigma\|_{\infty}\frac{1}{h^{\alpha}}\left(\frac{1}{h}\int_{x}^{x+8h}|b(y)-b_J|^{r}\sigma^{-r}(y)dy\right)^{1/r}\\
&=
C\|b\|_{Lip_{\alpha},\mu}\|g\sigma\|_{\infty}\tau^{-1}(x).
\end{split}\end{equation*}
Thus,
$$\|\tau M_3^{+}g\|_\infty\leq C\|g\sigma\|_{\infty}.$$
From Lemma \ref{th:lem2}, we get
\begin{equation}
\|M_{3}^{+}g\|_{L^{p}(w)}\leq
C\|g\|_{L^{p}(v)},\label{M31}
\end{equation}
where $w\in A_p^{+}$ and $v=\mu^{(1+\alpha)p}w\in A_p$.
Since $T^+$ is bounded from $L^p(v)$ to $L^p(v)$ \cite{AF}, it follows that
\begin{equation}
\|M_{3}^{+}(T^+f)\|_{L^{p}(w)}\leq
C\|T^+f\|_{L^{p}(v)}\leq
C\|f\|_{L^{p}(v)}. \label{M3}
\end{equation}

Consequently, by \eqref{co}, \eqref{M1}, \eqref{M2} and \eqref{M3}, we obtain
$$\|T_b^+f\|_{\dot{F}^{\alpha,
\infty}_{p,
+}(\omega)}\approx\left\|\sup_{h>0}\frac{1}{h^{1+\alpha}}\int_{x}^{x+h}|T_b^+f-(T_b^+f)_{[x,
x+h]}|\right\|_{L^{p}(\omega)}\leq
C\|f\|_{L^{p}(v)}.$$
This completes the proof of Theorem \ref{th:1}.
\end{proof}

\section{Weighted estimates for commutators of one-sided discrete square functions}

\begin{proof}[Proof of Theorem $\ref{th:2}$]
The procedure of this proof is analogous to that of Theorem \ref{th:1}.
Let $\lambda$ be an arbitrary constant. Then
\begin{equation*}\begin{split}
S_{b}^{+}f(x)&= \left\|\int_{\mathbb{R}}(b(x)-b(y))H(x-y)f(y)dy\right\|_{l^2}\\
&\leq\left\|(b(x)-\lambda)\int_{\mathbb{R}}H(x-y)f(y)dy\right\|_{l^2}+
\left\|\int_{\mathbb{R}}H(x-y)(b(y)-\lambda)f(y)dy\right\|_{l^2}\\
&=|b(x)-\lambda|S^{+}f(x)+S^{+}((b-\lambda)f)(x).
\end{split}\end{equation*}
Let $x\in\mathbb{R}$, $h>0$ and let $j\in\mathbb{Z}$ be such that $2^j\leq h<2^{j+1}$.
Set $J=[x,x+2^{j+3}]$. Write $f=f_1+f_2$, where $f_1=f\chi_J$, set $\lambda=b_J$.
Then
\begin{equation*}\begin{split}
&\frac{1}{h^{1+\alpha}}\int_{x}^{x+2h}|S^+_{b}f(y)-(S^+_{b}f)_{[x, x+2h]}|dy\\
&\leq\frac{2}{h^{1+\alpha}}\int_{x}^{x+2h}|S_{b}^{+}f(y)-S^{+}((b-b_{J})f_{2})(x)|dy\\
&\leq\frac{2}{h^{1+\alpha}}\int_{x}^{x+2h}|S^{+}((b-b_{J})f_{1})(y)|dy \\
&+
\frac{2}{h^{1+\alpha}}\int_{x}^{x+2h}|S^{+}((b-b_{J})f_{2})(y)
-S^{+}((b-b_{J})f_{2})(x)|dy\\
&+ \frac{2}{h^{1+\alpha}}\int_{x}^{x+2h}|b(y)-b_{J}||S^{+}f(y)|dy\\
&=2(L(x)+LL(x)+LLL(x)).
\end{split}\end{equation*}
By definition, we have
\begin{equation*}\begin{split}
LL(x)&\leq \frac{1}{h^{1+\alpha}}\int_{x}^{x+2^{j+2}}\|U^+((b-b_J)f_2)(y)-U^+((b-b_J)f_2)(x)\|_{l^2}\\
&\leq \frac{1}{h^{1+\alpha}}\int_{x}^{x+2^{j+2}}\int_{x+2^{j+3}}^{\infty}|(b(t)-b_J)f(t)|\|H(y-t)-H(x-t)\|_{l^2} dtdy.
\end{split}\end{equation*}

Define sublinear operators:
\begin{equation*}\begin{split}
&M^{+}_{4}f(x)=\sup_{j\in\mathbb{Z}}\frac{1}{2^{j(1+\alpha)}}\int_{x}^{x+2^{j+2}}|S^{+}((b-b_{J})f\chi_{J})(y)|dy,\\
&M^{+}_{5}f(x)=\sup_{j\in\mathbb{Z}}\frac{1}{2^{j(1+\alpha)}}\int_{x}^{x+2^{j+2}}
\int_{x+2^{j+3}}^{\infty}|(b(t)-b_J)f(t)|\|H(y-t)-H(x-t)\|_{l^2} dtdy.\\
\end{split}\end{equation*}
It follows that
\begin{equation}\begin{split}
&\frac{1}{h^{1+\alpha}}\int_{x}^{x+2h}|S^+_{b}f(y)-(S^+_{b}f)_{[x, x+2h]}|dy\\
&\quad\leq
C\left(M^{+}_{4}f(x)+M^{+}_{5}f(x)+M^{+}_{3}(S^{+}f)(x)\right),\label{co1}
\end{split}\end{equation}
where $M^{+}_{3}$ is defined in \eqref{MMM}. It follows from \eqref{M3} that
$$ \|M_{3}^{+}(S^+f)\|_{L^{p}(w)}\leq
C\|S^+f\|_{L^{p}(v)}.$$

 By Theorem A in \cite{TT}, we have
$$ \|S^+f\|_{L^{p}(v)}\leq
C\|f\|_{L^{p}(v)}.$$
Therefore,
\begin{equation}
 \|M_{3}^{+}(S^+f)\|_{L^{p}(w)}\leq
C\|f\|_{L^{p}(v)}\label{M33}
\end{equation}
holds for $w\in A_p^{+}$ and $v=\mu^{(1+\alpha)p}w\in A_p$.

Next we shall prove that $M^+_{4}$, $M^+_{5}$ are all bounded from $L^{p}(v)$ to $L^{p}(w)$.
 For $M^+_{4}$.
Assume that $\tau$ and $\sigma=\mu^{1+\alpha}\tau$ are such that $\tau^{-1}\in A^{-1}_{1}$ and $\sigma^{-1}\in
A_{1}$. By H\"older's inequality, Lemma \ref{th:lem3} and the fact that $S^{+}$ is bounded from
$L^{r}(\mathbb{R})$ to $L^{r}(\mathbb{R})$ \cite{TT}, we get
\begin{equation*}\begin{split}
&\frac{1}{2^{j(1+\alpha)}}\int_{x}^{x+2^{j+2}}|S^{+}((b-b_{J})f\chi_{J})(y)|dy\\
&\leq \frac{C}{2^{j\alpha}}\left(\frac{1}{2^{j}}\int_{x}^{x+2^{j+2}}|S^{+}((b-b_{J})f\chi_{J})(y)|^{r}dy\right)^{1/r}\\
&\leq \frac{C}{2^{j\alpha}}\left(\frac{1}{2^{j}}\int_{x}^{x+2^{j+3}}|(b(y)-b_{J})f(y)|^{r}dy\right)^{1/r}\\
&\leq C\|f\sigma\|_{\infty}\frac{1}{2^{j\alpha}}\left(\frac{1}{2^{j}}\int_{x}^{x+2^{j+3}}|b(y)-b_J|^{r}\sigma^{-r}(y)dy\right)^{1/r}\\
&=
C\|b\|_{Lip_{\alpha},\mu}\|f\sigma\|_{\infty}\tau^{-1}(x).
\end{split}\end{equation*}
Therefore,
$$\|\tau M_4^{+}f\|_\infty\leq C\|f\sigma\|_{\infty}.$$
Then by Lemma \ref{th:lem2}, the inequality
\begin{equation}
\|M_{4}^{+}f\|_{L^{p}(w)}\leq
C\|f\|_{L^{p}(v)}\label{M4}
\end{equation}
holds for $w\in A_p^{+}$ and $v=\mu^{(1+\alpha)p}w\in A_p$.

For $M_5^{+}$, let $I_j=[x,x+2^{j}]$, $j\in\mathbb{Z}$. Then
\begin{equation}\begin{split}
&
\int_{x+2^{j+3}}^{\infty}|(b(t)-b_J)f(t)|\|H(y-t)-H(x-t)\|_{l^2} dt\\
&\leq
\sum_{k=j+3}^{\infty}\int_{x+2^{k}}^{x+2^{k+1}}
|(b(t)-b_{I_{k+1}})f(t)|\|H(y-t)-H(x-t)\|_{l^2}dt\\
&+
\sum_{k=j+3}^{\infty}|b_{I_{k+1}}-b_J|\int_{x+2^{k}}^{x+2^{k+1}}
|f(t)|\|H(y-t)-H(x-t)\|_{l^2}dt\\
&= LL_{1}(x)+LL_{2}(x).\label{M511}
\end{split}\end{equation}

Since $\tau$ and $\sigma=\mu^{1+\alpha}\tau
=(\frac{v}{w})^{\frac{1}{p}}\tau$
are such that $\tau^{-1}\in A^{-}_{1}$ and $\sigma^{-1}\in
A_{1}\subset A^{-}_{1}$, by Lemma \ref{th:lem1}, there exists $\varepsilon>0$ such that when $1<r<1+\varepsilon$, 
$\tau^{-r}\in A^{-}_{1}$ and $\sigma^{-r}\in A^{-}_{1}$. Since $\alpha<1-\frac{1}{1+\varepsilon}$,
we can choose $r>1$ such that $\alpha<\frac{1}{r'}$,
then by H\"older's inequality and Lemma \ref{th:lem3}, 
\begin{equation*}\begin{split}
LL_{1}(x)\leq C\sum_{k=j+3}^{\infty}&\left(\int_{I_{k+1}}|b(t)-b_{I_{k+1}}|^r
|f(t)|^rdt\right)^{\frac{1}{r}}\times\\
& \left(\int_{x+2^{k}}^{x+2^{k+1}}
\|H(y-t)-H(x-t)\|^{r'}_{l^2}dt\right)^{\frac{1}{r'}}.\\
\end{split}\end{equation*}

By Theorem 1.6 in \cite{TT}, for all $y\in[x,x+2^{j+3}]$, the kernel
$H$ satisfies
\begin{equation}
\left(\int_{x+2^{k}}^{x+2^{k+1}}
\|H(y-t)-H(x-t)\|^{r'}_{l^2}dt\right)^{\frac{1}{r'}}\leq C\frac{2^{\frac{j}{r'}}}{2^{k}}.\label{H}
\end{equation}
Therefore
\begin{equation}\begin{split}
LL_{1}(x)&\leq
C\|f\sigma\|_\infty
\sum_{k=j+3}^{\infty}\frac{2^{\frac{j}{r'}}}{2^{k}}\left(\int_{I_{k+1}}|b(t)-b_{I_{k+1}}|^r\sigma^{-r}(t)dt
\right)^{\frac{1}{r}}
\\&\leq C
\|f\sigma\|_\infty\|b\|_{Lip_{\alpha},\mu}\tau^{-1}(x)
\sum_{k=j+3}^{\infty}\frac{2^{\frac{j}{r'}}}{2^{k}}|I_{k+1}|^{\alpha+\frac{1}{r}}
\\
&\leq
C\|f\sigma\|_\infty\|b\|_{Lip_{\alpha},\mu}\tau^{-1}(x)
\sum_{k=j+3}^{\infty}\frac{2^{\frac{j}{r'}}}{2^{k}}2^{(k+1)(\alpha+\frac{1}{r})}
\\
&\leq C 2^{j\alpha}\|b\|_{Lip_{\alpha},\mu}\|f\sigma\|_\infty\tau^{-1}(x). \label{LL1}
\end{split}\end{equation}

By the same proof as in Lemma \ref{th:lem4} we can get that
$$|b_{I_{k+1}}-b_J|=\sum_{m=j+3}^{k}|b_{I_{m+1}}-b_m|
\leq C(2^{j\alpha}+2^{k\alpha})\|b\|_{Lip_{\alpha},\mu}\mu^{1+\alpha}(x) $$

Then by \eqref{H}, H\"older's inequality and the fact that $\sigma^{-r}\in A^{-}_1$, $\alpha<\frac{1}{r'}$, we have
\begin{equation}\begin{split}
LL_{2}(x)&\leq C\|b\|_{Lip_{\alpha},\mu}\mu^{1+\alpha}(x)\sum_{k=j+3}^{\infty}\frac{2^{\frac{j}{r'}}(2^{j\alpha}+2^{k\alpha})}{2^{k}}
\left(\int_{I_{k+1}}|f(t)|^rdt\right)^{\frac{1}{r}}
\\
&\leq C\|b\|_{Lip_{\alpha},\mu}\|f\sigma\|_\infty\mu^{1+\alpha}(x)\\
&\times\sum_{k=j+3}^{\infty}
\frac{2^{\frac{j}{r'}}(2^{j\alpha}+2^{k\alpha})}{2^{k}}\left(\int_{I_{k+1}}\sigma^{-r}(t)dt\right)^{\frac{1}{r}}
\\
&\leq C2^{j\alpha}\|b\|_{Lip_{\alpha},\mu}\|f\sigma\|_\infty\tau^{-1}(x).\label{L2-2}
\end{split}\end{equation}
Following from \eqref{M511}, \eqref{LL1} and \eqref{L2-2}, we get
\begin{equation*}\begin{split}
\int_{x+2^{j+3}}^{\infty}|(b(t)-b_J)f(t)|\|H(y-t)-H(x-t)\|_{l^2} dt
\leq
C2^{j\alpha}\|b\|_{Lip_{\alpha},\mu}\|f\sigma\|_\infty\tau^{-1}(x).
\end{split}\end{equation*}
Consequently,
\begin{equation*}\begin{split}
&\frac{1}{2^{j(1+\alpha)}}\int_{x}^{x+2^{j+2}}
\int_{x+2^{j+3}}^{\infty}|(b(t)-b_J)f(t)|\|H(y-t)-H(x-t)\|_{l^2} dtdy\\
&\leq C\|b\|_{Lip_{\alpha},\mu}\|f\sigma\|_\infty\tau^{-1}(x).
\end{split}\end{equation*}
Therefore,
$$\|\tau M_5^{+}f\|_\infty\leq C\|f\sigma\|_{\infty}.$$
Then by Lemma \ref{th:lem2}, the inequality
\begin{equation}
\|M_{5}^{+}f\|_{L^{p}(w)}\leq
C\|f\|_{L^{p}(v)}\label{M5}
\end{equation}
holds for $w\in A_p^{+}$ and $v=\mu^{(1+\alpha)p}w$.
Then Theorem \ref{th:2} follows form \eqref{co1}-\eqref{M4} and \eqref{M5}.
\end{proof}

\begin{remark}
It should be noted that by the well-known extrapolation theorem
appeared in \cite{ST} and the similar estimate of Lemma
\ref{th:lem3}, we can also obtain the corresponding boundedness for
commutators generated by `both-sided' singular integrals and
weighted Lipschitz functions from weighted Lebesgue spaces to
weighted `both-sided' Trieble-Lizorkin spaces. In brief, we leave
the completion of the proof to the interested readers.

\end{remark}
\medskip

\bibliographystyle{amsplain}

\end{document}